\newtheorem{theorem}{Theorem}[section]
\newtheorem{corollary}{Corollary}[section]
\newlist{notes}{enumerate}{1}
\setlist[notes]{label=Note: ,leftmargin=*}
\title{On the Spectral Analysis of the Superpower Graph of the Direct Product of Dihedral Groups}
\author{Basit Auyoob Mir, Fouzul Atik
 \\ \small Department of Mathematics, SRM University-AP, Andhra Pradesh 522240, India.\\ \small e-mail: mirbasit553@gmail.com, fouzulatik@gmail.com, }
\date{}
\begin{document}
\maketitle{}
\begin{abstract}
The superpower graph of a finite group $G$, or $\mathcal{S}_G$, is an undirected simple graph whose vertices are the elements of the group $G$, and two distinct vertices $a,b\in G$ are adjacent if and only if the order of one vertex divides the order of the other vertex, which means that either $o(a)|o(b)$ or $o(b)|o(a)$. In this paper, we have investigated the $A_\alpha$-adjacency spectral properties of the superpower graph of the direct product $D_p\times D_p$, where $D_p$ is a dihedral group for $p$ being prime. Also, we have determined its Laplacian and signless Laplacian spectrum by giving different values to $\alpha$; furthermore, we delved into its superpower graph and deduced the $A_\alpha$- adjacency spectrum of the superpower graph of $D_p\times D_p$ and $D_{p^m}$ for $p$ being an odd prime.
\end{abstract}

\textbf{Keywords:} Power Graph, Adjacency Matrix, $A_\alpha$-adjacency matrix, Laplacian Matrix, Eigenvalues.\\
{\bf 2020 Mathematics Subject Classification:} 05C50, 05C25.

\section{Introduction}
Throughout the paper, all the groups and graphs taken here are assumed to be finite, and a graph means a simple undirected graph. For this paper, just basic graph knowledge will be required. Any book on graph theory, for instance, will have them \cite{West}. Our group theory notations are taken from \cite{Rose} and we refer to \cite{Cvetkovic, Doob} for the
algebraic graph theory concepts and notations. Graph theory is now a practical method in the understanding and description of relations, whether it be the social networks or within biological systems, and also on theoretical physics. In this rather vast area, graphs that arise in the setting of algebraic structures and, especially, groups, have attracted much attention from many researchers, and as a result, several structures, collectively termed as the graphs of group theory, are created, including Cayley graphs, commuting graphs, generating graphs, power graphs, and superpower graphs. These represent a geometric viewpoint on properties of underlying groups, and spectral graph theory techniques can be used to give a more algebraic insight. The natural connection between algebraic structures and the associated graphs has the tendency to produce complicated relations, with the spectral characteristics of the graph revealing crucial facts about the structure of the group itself.

Among the various graph constructions on groups, \emph{superpower graphs} represent a relatively new and intriguing class. The superpower graphs of finite groups are a quite recent development in the domain of graphs from groups, and they were first introduced by Hamzeh and Ashrafi, who
they call the order superpower graph $\mathcal{S}_G$ of the power graph $\mathcal{G}_G$ of a finite group, in 2018 \cite{Hamzeh2018}. A superpower graph, represented as $\mathcal{S}_G$, is defined as the graph in which vertices are the elements of the group $G$, and two distinct vertices $a,b\in G$ are adjacent if and only if the order of one vertex $a$ divides the order of the other vertex $b$ or the order of the vertex $b$ divides the order of vertex $a$. Hamzeh et al. \cite{Hamzeh2017} call this graph the main supergraph, and they investigated its full automorphism group. Recently, Hamzeh and Ashrafi explored some characteristics of the order supergraph of a group, and precisely, they showed that $\mathcal{S}_G=\mathcal{G}_G$ if
and only if G is cyclic \cite{Hamzeh2018}. They also investigated the 2-connectedness, Eulerianness, and Hamiltonianity of an order supergraph \cite{Hamzeh2019}.

With these motivations, we consider the superpower graphs of any non-abelian finite group $G$. Like the dihedral groups, denoted as $D_n$, are simple non-abelian groups; they can be thought of as symmetries of regular n-gons. Their complicated algebraic nature and their extensive use in diverse areas of mathematics and physics made them the best objects of detailed investigation in the context of graph theory. The direct product of groups, in their turn, gives a way of building more elaborate algebraic structures from more understandable ones, in which individual properties of a group can be combined and interact with one another in a bigger whole. Interpretations of spectral properties of superpower graphs of direct products of the dihedral graphs, e.g., of the type $D_p \times D_p$, can be insightful into how these graph operations behave over more complex group structures.

Later, people give sharp bounds
for the vertex connectivity of superpower graphs $\mathcal{S}_{D_{n}}$ and $\mathcal{S}_{Q_{n}}$ of dihedral groups $D_{n}$ and dicyclic group $\mathcal{Q}_{n}$ \cite{Kuma}. This paper significantly contributes to the understanding of the adjacency and Laplacian spectral properties of superpower graphs of some finite groups, particularly direct products of dihedral groups. We establish a full spectrum analysis by looking at the adjacency matrix, the Laplacian matrix, and the $A_\alpha$-matrix. According to Nikiforov's proposal in \cite{Nikiforov2017}, the convex combinations of $A(G)$ and $D(G)$ defined by $A_\alpha(G) = \alpha D(G) + (1-\alpha)A(G)$ where $A(G)$ is the adjacency matrix and $D(G)$ is the diagonal matrix of vertex degrees and $\alpha \in [0,1]$, provide the family of matrices between the adjacency matrix ($\alpha=0$) to the signless Laplacian matrix ($\alpha=1$) \cite{Nikiforov2017}. By examining $A_\alpha(G)$, one can gain a deeper comprehension of the spectral behavior of a graph. For more recent papers on the spectral properties of $A_\alpha(G)$, we refer the reader to \cite{Wang, NikiforovTrees, Pirzada2021, Dan} and the references therein.

In particular, the characteristic polynomial of the adjacency matrix of the superpower graph of $D_p\times D_p$, that is, the characteristic polynomial of $A(\mathcal{S}_{D_p\times D_p})$, has been found in this study. Additionally, for $G=D_p \times D_p$, we expanded it to calculate the Laplacian spectrum of $\mathcal{S}_G$, for $G=D_p\times D_p$. The $A_\alpha$-adjacency spectrum of $\mathcal{S}_{D_{p^k}}$, where $p$ is an odd prime, was also examined. The $A_\alpha$-adjacency spectrum of $\mathcal{S}_{D_p \times D_p}$ was also calculated. Also, since the $A_\alpha(G)$ is real and symmetric, we can arrange its eigenvalues in decreasing order as $\lambda_1^{\alpha} \geq \lambda_2^{\alpha} \geq \cdots \geq \lambda_n^{\alpha}$. In this paper, we have represented the spectrum of eigenvalues with their corresponding multiplicities $m_1,m_2,\cdots,m_k$ as  $\text{Spec}(A_\alpha(\mathcal{S}_{D_p\times D_p})) =
\left(
\begin{array}{cccc}
\lambda_1 & \lambda_2 & \cdots & \lambda_n \\
m_1 & m_2 & \cdots & m_k 
\end{array}
\right)$, where $k\leq n$.
The paper is structured as follows: Basic definitions and preliminaries that are utilized in the major results are included in Section 2. The primary findings were provided in Section 3. Section $4$ contains the $A_\alpha$-Adjacency of Superpower Graph of $D_{p^k}$.  The conclusion of the paper is found in section 5.

\section{Preliminary Results}
The objective of this section is to provide certain concepts and results from group theory and graph theory with the aim of achieving the objective of this work.  In order to develop notations, we rewrite the standard definitions and conclusions from \cite{Bond} for graph theory and \cite{Gall} for group theory.  All of the groups in the study are finite.  For a group $G$, the order of an element $x$ is represented by $o(x)$.  The dihedral group of order $2n$ is also denoted as $D_{n}$. It is a non-commutative group formed by two elements $\langle a, b \rangle$ such that $a$ and $b$ meet the following properties: (i) $o(a)=n$, $o(b)=2$ (ii) $ba=a^{-1}b=a^{n-1}b$. For a graph $G$, its diagonal matrix of degrees is $D(G)$, and its adjacency matrix is $A(G)$.  Similar to the \emph{signless Laplacian} $Q(G)= A(G) + D(G)$, the hybrid study of $A(G)$ and $D(G)$ was proposed by Cvetković in \cite{Cvetkovic} and was subsequently thoroughly investigated.  The study of the signless Laplacian matrix $Q(G)$ has demonstrated that it is a remarkable matrix with a wide range of diversity.  However, $Q(G)$ is just the sum of $A(G)$ and $D(G)$, and studies on $Q(G)$ have demonstrated the differences and similarities between $Q(G)$ and $A(G)$.

The convex linear combination of the matrices \( A(G) \) and \( D(G) \) is naturally considered to investigate their effect on the spectral features of \( Q(G) \).  \emph{generalized adjacency matrix} for \( 0 \leq \alpha \leq 1 \) is $A_\alpha(G) = \alpha D(G) + (1 - \alpha) A(G)$.  With efficiency, the previously mentioned equation interpolates between the degree and adjacency matrices.  We have, in particular, $A_0(G) = A(G), \quad A_1(G) = D(G), \quad \text{and} \quad A_{1/2}(G) = \frac{1}{2} Q(G) $. As \( A_\alpha(G) \) is a real symmetric matrix, all of its eigenvalues are real and can be arranged as follows: $\lambda_1(A_\alpha(G)) \geq \lambda_2(A_\alpha(G)) \geq \cdots \geq \lambda_n(A_\alpha(G))$, where $\lambda_1(A_\alpha(G))$ is known as the \emph{generalized adjacency spectral radius} of $G$.

\begin{theorem}\label{upperblockmat}\emph{\cite{Horn}} Let \( M \) be a block upper triangular matrix of the form

\[
M =
\begin{bmatrix}
M_{11} & M_{12} & \cdots & M_{1k} \\
0 & M_{22} & \cdots & M_{2k} \\
\vdots & \vdots & \ddots & \vdots \\
0 & 0 & \cdots & M_{kk}
\end{bmatrix}
\]

where each \( M_{ii} \) is a square matrix. Then, the determinant of \( M \) is given by

\[
\det(M) = \det(M_{11}) \det(M_{22}) \cdots \det(M_{kk}).
\]
\end{theorem}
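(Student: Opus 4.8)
The plan is to prove the statement by induction on the number $k$ of diagonal blocks, after first isolating the essential two-block case. For $k=1$ there is nothing to prove. Assuming the result for $k-1$ blocks, I would regard $M$ as a $2\times 2$ block matrix
\[
M = \begin{bmatrix} M_{11} & R \\ 0 & S \end{bmatrix},
\]
where $R = \begin{bmatrix} M_{12} & \cdots & M_{1k}\end{bmatrix}$ and $S$ is the $(k-1)$-block upper triangular matrix with diagonal blocks $M_{22},\ldots,M_{kk}$. If I can establish the two-block identity $\det M = \det(M_{11})\det(S)$ for any such decomposition, then the inductive hypothesis gives $\det(S)=\det(M_{22})\cdots\det(M_{kk})$ and the claim follows at once. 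Thus the entire argument reduces to proving $\det\begin{bmatrix} A & B \\ 0 & C\end{bmatrix} = \det(A)\det(C)$ for square $A$ and $C$.

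To handle this two-block case, I would first treat the subcase in which $C$ is invertible, using the factorization
\[
\begin{bmatrix} A & B \\ 0 & C \end{bmatrix}
= \begin{bmatrix} I & BC^{-1} \\ 0 & I \end{bmatrix}
  \begin{bmatrix} A & 0 \\ 0 & C \end{bmatrix}.
\]
The first factor is unit upper triangular, hence has determinant $1$, while the second is block diagonal with determinant $\det(A)\det(C)$; multiplicativity of the determinant then yields the identity. The remaining singular case is recovered by a continuity argument: both $\det\begin{bmatrix} A & B \\ 0 & C+tI\end{bmatrix}$ and $\det(A)\det(C+tI)$ are polynomials in $t$ which agree for all but the finitely many $t$ for which $C+tI$ fails to be invertible, so they agree identically, and in particular at $t=0$.

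Alternatively, and perhaps more cleanly, the two-block identity follows directly from the Leibniz expansion $\det(M)=\sum_{\sigma}\operatorname{sgn}(\sigma)\prod_i M_{i,\sigma(i)}$. Because the lower-left block is zero, any permutation $\sigma$ contributing a nonzero product must send each row index of the bottom block to a column index of the bottom block; being a bijection, $\sigma$ then stabilizes both the top and bottom index sets and splits as $\sigma=\sigma_1\times\sigma_2$. The sign factors as $\operatorname{sgn}(\sigma)=\operatorname{sgn}(\sigma_1)\operatorname{sgn}(\sigma_2)$, and the sum separates into $\det(A)\det(C)$. I expect the main subtlety to lie not in the algebra but in the bookkeeping needed to justify that the surviving permutations respect the block structure: in the factorization route this surfaces as the need to dispose of the singular case (handled by the density argument above), whereas in the Leibniz route it surfaces as the combinatorial claim that each block is stabilized, which is cleanest to establish by processing the diagonal blocks from the last one upward. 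For the purposes of this paper I would simply cite the two-block case and the induction, since the full statement is the standard reference result of \cite{Horn}.
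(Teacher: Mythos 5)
The paper offers no proof of this statement at all; it is quoted verbatim as a standard preliminary from \cite{Horn}, so there is nothing internal to compare your argument against. Your proposal is nevertheless a correct and complete proof. The induction on the number of diagonal blocks correctly reduces everything to the two-block identity $\det\begin{bmatrix} A & B \\ 0 & C\end{bmatrix}=\det(A)\det(C)$, and both of your routes to that identity are sound: the factorization-plus-polynomial-identity argument handles the singular case properly, and in the Leibniz expansion your key observation --- that any permutation with nonvanishing contribution must map the bottom row indices into the bottom column indices and hence, being a bijection of finite sets of equal size, stabilizes both blocks --- is exactly right. One small point worth making explicit in the factorization route: the step $\det\begin{bmatrix} A & 0\\ 0 & C\end{bmatrix}=\det(A)\det(C)$ is itself a special case of the theorem being proved, so to avoid circularity you should either establish it directly (for instance by writing the block-diagonal matrix as $\operatorname{diag}(A,I)\cdot\operatorname{diag}(I,C)$ and using cofactor expansion on each factor, or by the same Leibniz argument) or simply take the Leibniz route from the start, which needs no auxiliary fact and disposes of singular $C$ with no extra work.
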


\section{Main Results}
We start this section with the spectral properties of the superpower graph of the direct product of two dihedral groups. More precisely, we have the group $G = D_p \times D_p$, where $p$ is an odd prime. The examination of the spectrum of the superpower graph gives information about the algebraic and combinatorial structure of the group. In the next theorem, we calculate the $A_\alpha$-adjacency spectrum of $\mathcal{S}_{D_p \times D_p}$ explicitly.

\begin{theorem}
Let $G=D_p\times D_p$, then the spectrum of $A_\alpha(\mathcal{S}_{D_p\times D_p})$ is given as
\[
\text{Spec}(A_\alpha(\mathcal{S}_{D_p\times D_p})) =
\left(
\begin{array}{cccc}
(3p^2-2p)\alpha-1 & 4p^2\alpha-1 & (3p^2+1)\alpha-1  \\
p^2-2 & 2p^2-2p-1 & p^2+2p-1 
\end{array}
\right)
\]
 and rest of $4$ eigenvalues are given by the $4\times 4$ determinant
 \[\tiny
\left|
\begin{array}{cccc}
(4p^2-1)\alpha-\lambda & (p^2+2p)(1-\alpha) & (2p^2-2p)(1-\alpha) & (p^2-1)(1-\alpha) \\

(1-\alpha) & (p^2+2p-1)(1-\alpha)+3p^2\alpha-\lambda & (2p^2-2p)(1-\alpha) & 0\\

(1-\alpha) & (p^2+2p)(1-\alpha) & (2p^2-2p-1)(1-\alpha)+(4p^2-1)\alpha-\lambda & (p^2-1)(1-\alpha) \\

(1-\alpha) & 0 & (2p^2-2p)(1-\alpha) & [(p^2-2)+(2p^2-2p+1)\alpha]-\lambda \\
\end{array}
\right|=0
\]
\end{theorem}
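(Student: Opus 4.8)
The plan is to exploit the fact that adjacency in $\mathcal{S}_{D_p\times D_p}$ depends only on the orders of elements, so that vertices of equal order are interchangeable and may be grouped into classes. First I would determine the order distribution of $G=D_p\times D_p$. Writing each element as a pair $(x,y)$ with $o((x,y))=\mathrm{lcm}(o(x),o(y))$ and using that $D_p$ contains one element of order $1$, exactly $p$ reflections of order $2$, and $p-1$ rotations of order $p$, I would tally the four possible orders $1,2,p,2p$ to obtain the class sizes $n_1=1$, $n_2=p^2+2p$ (order $2$), $n_p=p^2-1$ (order $p$), and $n_{2p}=2p^2-2p$ (order $2p$), which sum to $|G|=4p^2$.

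Next I would record the inter-class adjacency from the divisibility relations among $\{1,2,p,2p\}$. Each class consists of elements of equal order and hence is a clique, and two distinct classes are completely joined precisely when one of their common orders divides the other. The only failure is between the order-$2$ and order-$p$ classes: since $p$ is an odd prime, neither $2\mid p$ nor $p\mid 2$, so these two classes are non-adjacent, whereas the order-$1$ and order-$2p$ classes are joined to everything. From this I would read off the vertex degrees $d_1=4p^2-1$, $d_2=3p^2$, $d_p=3p^2-2p-1$, and $d_{2p}=4p^2-1$.

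The central observation is that this partition into order classes is equitable, because the number of neighbours a vertex has in any given class depends only on its own order. Consequently $A_\alpha(\mathcal{S}_G)$ has a block form in which each diagonal block equals $\alpha d_i I+(1-\alpha)(J-I)$ and each off-diagonal block equals $(1-\alpha)J$ or $0$ according to the join structure. I would then split the eigenvectors into two families. Any vector supported on a single class $C_i$ and summing to zero there is annihilated by every $J$-block, while the diagonal block acts as multiplication by $\alpha d_i-(1-\alpha)=(d_i+1)\alpha-1$; this produces the eigenvalue $(d_i+1)\alpha-1$ with multiplicity $n_i-1$, yielding exactly the three listed values $(3p^2-2p)\alpha-1$, $4p^2\alpha-1$, and $(3p^2+1)\alpha-1$ with multiplicities $p^2-2$, $2p^2-2p-1$, and $p^2+2p-1$ respectively.

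The remaining four eigenvalues arise from vectors constant on each class, i.e.\ from the $4\times4$ quotient matrix $B$ whose $(i,j)$ entry is $\alpha d_i+(n_i-1)(1-\alpha)$ when $i=j$ and otherwise is $n_j(1-\alpha)$ or $0$ according to whether classes $i$ and $j$ are joined. Ordering the classes as (order $1$, order $2$, order $2p$, order $p$) reproduces precisely the displayed matrix, and its four roots are the sought eigenvalues; the total multiplicity $(4p^2-4)+4=4p^2$ confirms that none is missed. The genuinely delicate points are the bookkeeping of the order distribution and, above all, correctly identifying the single non-join between the order-$2$ and order-$p$ classes, since overlooking the parity of $p$ would alter both the degrees and the off-diagonal pattern. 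Once the equitable partition is in place the eigenvalue computation is routine, and Theorem~\ref{upperblockmat} may be invoked to present $\det(B-\lambda I)$ in factored form should a fully explicit characteristic polynomial be wanted.
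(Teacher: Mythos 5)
Your proposal is correct: the order census ($1$, $p^2+2p$, $2p^2-2p$, $p^2-1$ elements of orders $1,2,2p,p$), the degrees, the identification of the unique non-join between the order-$2$ and order-$p$ classes, and the resulting $4\times4$ quotient matrix all agree exactly with the paper, including the entry $[(p^2-2)+(2p^2-2p+1)\alpha]$ obtained from $\alpha(3p^2-2p-1)+(p^2-2)(1-\alpha)$. The only difference is in how the spectrum is extracted from the block structure. The paper works directly with the characteristic determinant of the full $4p^2\times4p^2$ matrix and performs explicit row and column operations ($R_{3p^2+i}\to R_{3p^2+i}-R_{3p^2+1}$, then summing columns, and similarly for the other blocks) to peel off the factors $[\lambda-((d_i+1)\alpha-1)]^{n_i-1}$ and reduce to the $4\times4$ determinant; you instead invoke the equitable-partition machinery, exhibiting the eigenvectors supported on a single class with zero sum (giving $(d_i+1)\alpha-1$ with multiplicity $n_i-1$) and passing to the quotient matrix for the remaining four eigenvalues. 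The two routes are computationally equivalent, but yours makes the multiplicity bookkeeping transparent and self-verifying (the count $(4p^2-4)+4=4p^2$), whereas the paper's determinant manipulations are harder to audit — indeed the intermediate factor it records, $[-\lambda-(2p-3p^2)\lambda-1]^{p^2-2}$, contains a typographical slip that your eigenvector argument avoids entirely.
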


\begin{proof}
Let \[
G = D_p \times D_p = \left\{
\begin{array}{l}
(a, a),\ (a, a^2),\ \dots,\ (a, a^{p-1}),\ (a, e), \\
(a, b),\ \dots,\ (a, a^{p-1}b), \\
(a^2, a),\ (a^2, a^2),\ \dots,\ (a^2, a^{p-1}),\ (a^2, e), \\
(a^2, b),\ \dots,\ (a^2, a^{p-1}b), \\
\vdots \\
(a^{p-1}, a),\ (a^{p-1}, a^2),\ \dots,\ (a^{p-1}, a^{p-1}),\ (a^{p-1}, e), \\
(a^{p-1}, b),\ \dots,\ (a^{p-1}, a^{p-1}b), \\
(e, a),\ (e, a^2),\ \dots,\ (e, a^{p-1}),\ (e, e), \\
(e, b),\ \dots,\ (e, a^{p-1}b), \\
(b, a),\ (b, a^2),\ \dots,\ (b, a^{p-1}),\ (b, e), \\
(b, b),\ \dots,\ (b, a^{p-1}b), \\
\vdots \\
(a^{p-1}b, a),\ (a^{p-1}b, a^2),\ \dots,\ (a^{p-1}b, a^{p-1}),\ (a^{p-1}b, e), \\
(a^{p-1}b, b),\ \dots,\ (a^{p-1}b, a^{p-1}b)
\end{array}
\right\}
\]

The order of $D_p\times D_p$ that is  $o(D_p\times D_p)=4p^2$, so possible orders of elements will be $1,p, p^2, 2p,4p, 2p^2, 4p^2$. Now, we know that the order of elements $(x,y)\in D_p\times D_p$ is the $\mathrm{lcm}(o(x),o(y))$. Here, every ordered pair of rotations will be of order $p$ as the order of $a^i, 1\leq i\leq p$ in $D_p$ is $p$, so $\mathrm{lcm}(o(a^i), o(a^j))=p$, where $1\leq j\leq p$. Next, on taking an ordered pair of rotation and reflection, $(a^i,a^{j}b)$ or $(a^{j}b,a^i)$  where $1\leq i\leq p$ and $1\leq j\leq p$, here we know the order of every reflection is $2$ and the order of every rotation is $2$ so $\mathrm{lcm}(o(p),o(2))=2p$ for $p\neq 2$. The next case is an ordered pair of reflections $(a^{i}b,a^{j}b)$, since every reflection is of order $2$ so $\mathrm{lcm}(o(a^{i}b),o(a^{j}b))=2$. The remaining element is the identity element, $(e,e)$, which will have order $1$. Therefore, there will be elements of order $2,p,2p$ and identity with order $1$, and there will be precisely $p^2-1$ elements of order $p$, $2p^2-2p$ elements of order $2p$, $p^2+2p$ elements of order $2$, and one element of order $1$. By the definition of the superpower graph of a finite group, we have $\mathcal{S}_{D_p\times D_p}$ the following:
\begin{figure}[h]
\centering
\includegraphics[width=0.5\linewidth]{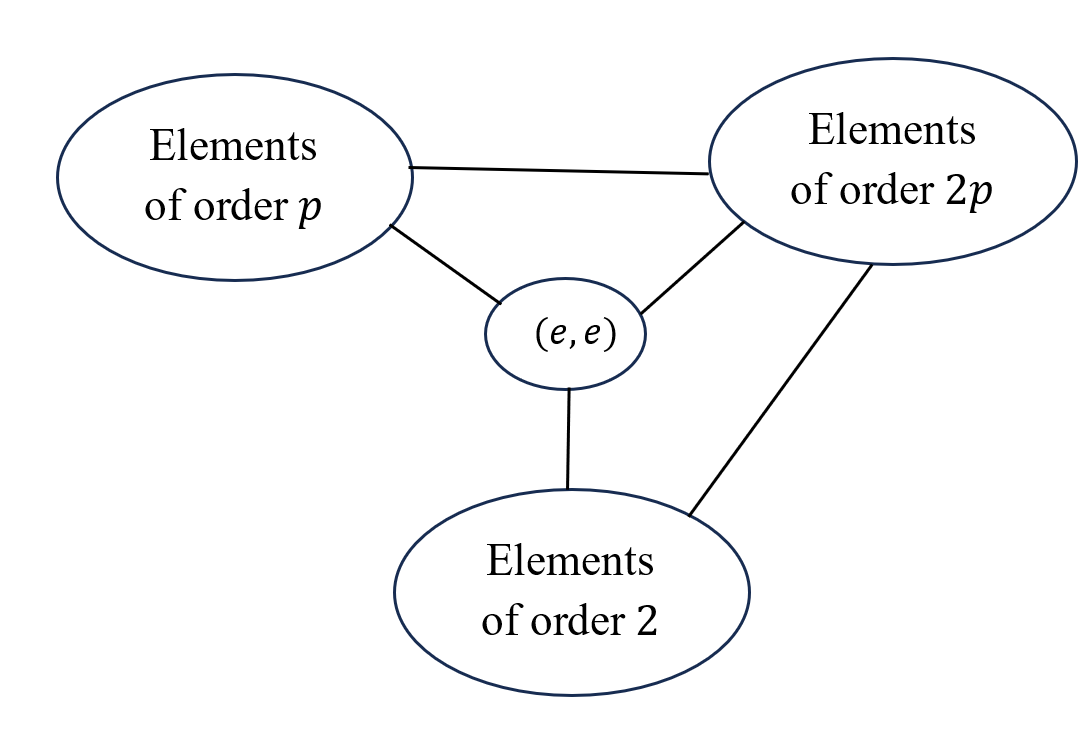}
\caption{$\mathcal{S}_{D_p\times D_p}$}
\label{fig:enter-label}
\end{figure}
Each element of order $p$ will form a clique similarly; elements of order 2 and order $2p$ will form cliques in themselves. For the adjacency matrix, on partitioning the vertices of the graph as follows: $V_1 = \{(e,e)\}, \quad V_2 = \text{set of elements of order } 2, \quad V_3 = \text{set of elements of order } 2p, \quad V_4 = \text{set of elements of order } p$. So the adjacency matrix will be given as
\[
A(\mathcal{S}_{D_{p}\times D_p}) =
\left[
\begin{array}{c|c|c|c}
O_1 & J_{1\times (p^2+2p)} & J_{1\times (2p^2-2p)} & J_{1\times (p^2-1)} \\
\hline
J_{(p^2+2p)\times1} & (J-I)_{p^2+2p} & J_{(p^2+2p)\times(2p^2-2p)} & O_{(p^2+2p)\times (p^2-1)} \\
\hline
J_{(2p^2-2p)\times1} & J_{(2p^2-2p)\times(p^2+2p)} & (J-I)_{(2p^2-2p)} & J_{(2p^2-2p)\times(p^2-1)} \\
\hline
J_{(p^2-1)\times1} & O_{(p^2-1)\times(p^2+2p)} & J_{(p^2-1)\times(2p^2-2p)} & (J-I)_{(p^2-1)} \\
\end{array}
\right]
\]
Also 
\[
\mathcal{D}(\mathcal{S}_{D_{p}\times D_p}) =
\left[
\begin{array}{c|c|c|c}
4p^2-1 & O_{1\times (p^2+2p)} & O_{1\times (2p^2-2p)} & O_{1\times (p^2-1)} \\
\hline
O_{(p^2+2p)\times1} & 3p^2I_{p^2+2p} & O_{(p^2+2p)\times(2p^2-2p)} & O_{(p^2+2p)\times (p^2-1)} \\
\hline
O_{(2p^2-2p)\times1} & O_{(2p^2-2p)\times(p^2+2p)} & (4p^2-1)I_{(2p^2-2p)} & O_{(2p^2-2p)\times(p^2-1)} \\
\hline
O_{(p^2-1)\times1} & O_{(p^2-1)\times(p^2+2p)} & O_{(p^2-1)\times(2p^2-2p)} & (3p^2-2p-1)I_{(p^2-1)} \\
\end{array}
\right]
\]
Therefore, the $A_\alpha$ adjacency will be given as 
\[\tiny
A_\alpha =
\left[
\begin{array}{c|c|c|c}
(4p^2-1)\alpha & (1-\alpha)J_{1\times (p^2+2p)} & (1-\alpha)J_{1\times (2p^2-2p)} & (1-\alpha)J_{1\times (p^2-1)} \\
\hline
(1-\alpha)J_{(p^2+2p)\times1} & [(1-\alpha)J+((3p^2+1)\alpha-1)I]_{p^2+2p} & (1-\alpha)J_{(p^2+2p)\times(2p^2-2p)} & O_{(p^2+2p)\times (p^2-1)} \\
\hline
(1-\alpha)J_{(2p^2-2p)\times1} & (1-\alpha)J_{(2p^2-2p)\times(p^2+2p)} & [(1-\alpha)J+(4p^2\alpha-1)I]_{(2p^2-2p)} & (1-\alpha)J_{(2p^2-2p)\times(p^2-1)} \\
\hline
(1-\alpha)J_{(p^2-1)\times1} & O_{(p^2-1)\times(p^2+2p)} & (1-\alpha)J_{(p^2-1)\times(2p^2-2p)} & [(1-\alpha)J+(3p^2\alpha-2p\alpha-1)I]_{(p^2-1)} \\
\end{array}
\right]
\]
Hence, the characteristic equation will be 
\[\tiny
\left|
\begin{array}{c|c|c|c}
(4p^2-1)\alpha-\lambda & (1-\alpha)J_{1\times (p^2+2p)} & (1-\alpha)J_{1\times (2p^2-2p)} & (1-\alpha)J_{1\times (p^2-1)} \\
\hline
(1-\alpha)J_{(p^2+2p)\times1} & [(1-\alpha)J+((3p^2+1)\alpha-1-\lambda)I]_{p^2+2p} & (1-\alpha)J_{(p^2+2p)\times(2p^2-2p)} & O_{(p^2+2p)\times (p^2-1)} \\
\hline
(1-\alpha)J_{(2p^2-2p)\times1} & (1-\alpha)J_{(2p^2-2p)\times(p^2+2p)} & [(1-\alpha)J+(4p^2\alpha-1-\lambda)I]_{(2p^2-2p)} & (1-\alpha)J_{(2p^2-2p)\times(p^2-1)} \\
\hline
(1-\alpha)J_{(p^2-1)\times1} & O_{(p^2-1)\times(p^2+2p)} & (1-\alpha)J_{(p^2-1)\times(2p^2-2p)} & [(1-\alpha)J+(3p^2-2p)\alpha-1-\lambda)I]_{(p^2-1)} \\
\end{array}
\right|=0
\]
Now, applying $R_{3p^2+i}\to R_{3p^2+i}-R_{3p^2+1}$ where $2\leq i\leq4p^2$. After this step, apply the column transformation$C_{3p^2+1}\to C_{3p^2+1}+C_{3p^2+2}+\cdots +C_{4p^2}$. So, we have 
$[-\lambda-(2p-3p^2)\lambda-1]^{p^2-2}=0$ and remaining part is as

\[\tiny
\left|
\begin{array}{c|c|c|c}
(4p^2-1)\alpha-\lambda & (1-\alpha)J_{1\times (p^2+2p)} & (1-\alpha)J_{1\times (2p^2-2p)} & (p^2-1)(1-\alpha) \\
\hline
(1-\alpha)J_{(p^2+2p)\times1} & [(1-\alpha)J+((3p^2+1)\alpha-1-\lambda)I]_{p^2+2p} & (1-\alpha)J_{(p^2+2p)\times(2p^2-2p)} & O_{(p^2+2p)\times 1} \\
\hline
(1-\alpha)J_{(2p^2-2p)\times1} & (1-\alpha)J_{(2p^2-2p)\times(p^2+2p)} & [(1-\alpha)J+(4p^2\alpha-1-\lambda)I]_{(2p^2-2p)} & (p^2-1)(1-\alpha)J_{(2p^2-2p)\times1} \\
\hline
(1-\alpha) & 0 & (1-\alpha)J_{(p^2-1)\times(2p^2-2p)} & [(p^2-2)+(2p^2-2p+1)\alpha]-\lambda \\
\end{array}
\right|=0
\]
Again, performing row operation and in another step column operation as $R_j\to R_j-R_2$, where $3\leq j\leq p^2+2p+1$. Next column operation as $C_2\to C_2+C_3+\cdots+C_{p^2+2p+1}$ and $C_{p^2+2p+2}\to C_{p^2+2p+2}+C_{p^2+2p+3}+\cdots+C_{3p^2+1}$. So we will have $[-\lambda+(3p^2+1)\alpha-1]^{(p^2+2p-2)}[-\lambda+4p^2\alpha-1]^{(2p^2-2p-1)}=0$ and rest of 4 roots will be given by $4\times 4$ the determinant.
\[\tiny
\left|
\begin{array}{cccc}
(4p^2-1)\alpha-\lambda & (p^2+2p)(1-\alpha) & (2p^2-2p)(1-\alpha) & (p^2-1)(1-\alpha) \\

(1-\alpha) & (p^2+2p-1)(1-\alpha)+3p^2\alpha-\lambda & (2p^2-2p)(1-\alpha) & 0\\

(1-\alpha) & (p^2+2p)(1-\alpha) & (2p^2-2p-1)(1-\alpha)+(4p^2-1)\alpha-\lambda & (p^2-1)(1-\alpha) \\

(1-\alpha) & 0 & (1-\alpha)(2p^2-2p) & [(p^2-2)+(2p^2-2p+1)\alpha]-\lambda \\
\end{array}
\right|=0
\]
\end{proof}

Using the complete description of the $A_\alpha$-adjacency spectrum, which was just completed in the previous theorem, we will now deduce adjacency eigenvalues of $\mathcal{S}_{D_p\times D_p}$ using $A_\alpha$-adjacency eigenvalues. This will provide us with information on the walk counts and cycles of $\mathcal{S}_{D_p\times D_p}$.

\begin{corollary}
Let $G=D_p\times D_p$, then the spectrum of $A(\mathcal{S}_{D_p\times D_p})$ is given as, $-1$ with algebraic multiplicity $4p^2-4$ and the remaining eigenvalues will be given by the equation 
$$\lambda^4
+  A\lambda^3
+ B\lambda^2
+ C\lambda
+ D = 0$$
where
$A=(4 - 4p^2), B=(p^4 + 2p^3 - 13p^2 - 2p + 6), C=(2p^6 + 2p^5 - 3p^4 + 4p^3 - 11p^2 - 6p + 4) $ and $D=(2p^6 + 2p^5 - 4p^4 + 2p^3 - 2p^2 - 4p + 1)$.
\end{corollary}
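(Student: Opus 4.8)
The plan is to exploit the fact that the ordinary adjacency matrix is precisely the case $\alpha=0$ of the $A_\alpha$-matrix, so the entire adjacency spectrum can be read off from the previous theorem by setting $\alpha=0$. First I would record that $A(\mathcal{S}_{D_p\times D_p}) = A_0(\mathcal{S}_{D_p\times D_p})$, and then evaluate the three closed-form eigenvalue families at $\alpha=0$. Each of $(3p^2-2p)\alpha-1$, $\,4p^2\alpha-1$, and $(3p^2+1)\alpha-1$ collapses to $-1$, so these three families merge into the single eigenvalue $-1$, whose multiplicity is the sum $(p^2-2)+(2p^2-2p-1)+(p^2+2p-1)=4p^2-4$. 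This accounts for $-1$ with the claimed algebraic multiplicity.

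For the four remaining eigenvalues, I would substitute $\alpha=0$ into the $4\times 4$ determinant of the theorem. The resulting matrix is exactly $T-\lambda I$, where $T$ denotes the quotient (divisor) matrix of the equitable partition $\{V_1,V_2,V_3,V_4\}$ into the identity, the order-$2$, the order-$2p$, and the order-$p$ vertices: each off-diagonal entry counts the neighbors a vertex of one cell has in another cell, and each diagonal entry is the intra-cell degree. Expanding $\det(T-\lambda I)$ by cofactors then yields the quartic $\lambda^4+A\lambda^3+B\lambda^2+C\lambda+D$. A quick consistency check validates the top coefficient: since $A=-\operatorname{tr}(T)=-\big(0+(p^2+2p-1)+(2p^2-2p-1)+(p^2-2)\big)=4-4p^2$, matching the stated value, while the constant term is $D=\det(T)$.

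The only nontrivial work is the symbolic expansion of this $4\times 4$ determinant and the collection of like powers of $p$ to produce the coefficients $B$, $C$, and $D$. This is routine but lengthy, and the place to be careful is tracking the mixed products of the entries, each of which is itself a quadratic in $p$, so that the degree-$6$ contributions in $C$ and $D$ are assembled correctly. I expect no conceptual obstacle beyond this bookkeeping; the trace check above already confirms the cubic coefficient and the structural interpretation via the quotient matrix, which gives confidence that the remaining coefficients follow by direct expansion.
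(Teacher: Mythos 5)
Your proposal is correct and follows essentially the same route as the paper: specialize Theorem 3.1 to $\alpha=0$, observe that the three eigenvalue families all collapse to $-1$ with total multiplicity $(p^2-2)+(2p^2-2p-1)+(p^2+2p-1)=4p^2-4$, and obtain the remaining four eigenvalues by expanding the $4\times 4$ determinant at $\alpha=0$. Your added trace check and the interpretation of that matrix as the quotient matrix of the equitable partition are nice sanity checks but do not change the argument.
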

\begin{proof}
By the definition of $A_\alpha(G)$, we have 
$$A_\alpha(G)=\alpha D(G)+(1-\alpha)A(G)$$
For, $\alpha=0$, we have 
$$A_0(G)=A(G)$$
which is the adjacency matrix of graph $G$. In \emph{theorem 3.1} for $\alpha=0$, we have adjacency spectrum as $-1$ with algebraic multiplicity $4p^2-4$. The remaining $4$ eigenvalues will be given by solving the determinant
\[
\left|
\begin{array}{cccc}
-\lambda & (p^2+2p) & (2p^2-2p) & (p^2-1) \\

1 & (p^2+2p-1)-\lambda & (2p^2-2p) & 0\\

1 & (p^2+2p) & (2p^2-2p-1)-\lambda & (p^2-1) \\

1 & 0 & 2p^2-2p & (p^2-2)-\lambda \\
\end{array}
\right|=0
\]
Which, on solving, we will get the rest of the eigenvalues by the bi-quadratic equation below
$$\lambda^4
+  A\lambda^3
+ B\lambda^2
+ C\lambda
+ D = 0$$
where
$A=(4 - 4p^2), B=(p^4 + 2p^3 - 13p^2 - 2p + 6), C=(2p^6 + 2p^5 - 3p^4 + 4p^3 - 11p^2 - 6p + 4) $ and $D=(2p^6 + 2p^5 - 4p^4 + 2p^3 - 2p^2 - 4p + 1)$.

\end{proof}

Using the complete description of the adjacency spectrum, which was just completed in the previous theorem, we will now generalize the spectral analysis of $\mathcal{S}_{D_p\times D_p}$. This will provide us with information on the walk counts and cycles of $\mathcal{S}_{D_p\times D_p}$.  The Laplacian spectrum contains additional information, particularly on the graph's connectivity, spanning tree enumeration, and other topological features.  We investigate the graph's Laplacian spectral properties directly based on the structure of the graph as shown by its adjacency matrix.

\begin{corollary}

Let \( G = D_p \times D_p \). Then, the Laplacian spectrum of Superpower graph of $D_p\times D_p$ is given as
\[
\text{Spec}(L(\mathcal{S}_{D_p\times D_p})) =
\left(
\begin{array}{ccccc}
3p^2-2p & 4p^2 & 3p^2+1 & 2p^2-2p+1 & 0  \\
p^2-2 & 2p^2-2p+1 & p^2+2p-1 & 1 & 1
\end{array}
\right)
\]

\end{corollary}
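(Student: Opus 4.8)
The plan is to obtain the Laplacian spectrum by reusing the structural decomposition and eigenvalue bookkeeping already established in Theorem 3.1, but now specialized to the Laplacian matrix $L(\mathcal{S}_{D_p\times D_p}) = D(G) - A(G)$ rather than to the $A_\alpha$-matrix. The starting point is the observation that $L(G) = A_\alpha(G)$ is \emph{not} directly recoverable by a single choice of $\alpha$ (the Laplacian is $D - A$, whereas $A_\alpha = \alpha D + (1-\alpha)A$), so I would instead build the Laplacian from scratch using the same vertex partition $V_1 = \{(e,e)\}$, $V_2$ (order $2$), $V_3$ (order $2p$), $V_4$ (order $p$) and the same block structure of $A(\mathcal{S}_{D_p\times D_p})$ and $\mathcal{D}(\mathcal{S}_{D_p\times D_p})$ displayed in the proof of Theorem 3.1. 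This gives $L$ as a block matrix whose diagonal blocks have the form $d_i I - (J - I) = (d_i + 1)I - J$ on each clique block, with off-diagonal blocks $-J$ or $O$ inherited from the adjacency structure.

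Next I would carry out the eigenvalue extraction in two stages, exactly mirroring the two rounds of row/column operations used in Theorem 3.1. In the first stage, applying $R_{3p^2+i}\to R_{3p^2+i}-R_{3p^2+1}$ across the $V_4$-block and then the compensating column sum collapses the $(p^2-1)$-dimensional block, producing the eigenvalue associated to the clique of order-$p$ vertices with multiplicity $p^2-2$; since each such vertex has degree $3p^2-2p-1$ and the clique contributes a $-J$-type interaction, the surviving Laplacian eigenvalue on the eigenvectors orthogonal to the all-ones direction is $(3p^2-2p-1)+1 = 3p^2-2p$, matching the first column of the claimed spectrum. In the second stage, the analogous operations on the $V_2$-block (degree $3p^2$) and the $V_3$-block (degree $4p^2-1$) yield the eigenvalues $3p^2+1$ with multiplicity $p^2+2p-1$ and $4p^2$ with multiplicity $2p^2-2p-1$. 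I would then account for the remaining four eigenvalues via a $4\times 4$ quotient-type determinant obtained from the equitable partition, and here I expect the constant eigenvalue $0$ (with multiplicity $1$, since the graph is connected) and the eigenvalue $2p^2-2p+1$ to emerge, together with the appropriate recount that bumps the multiplicity of $4p^2$ from $2p^2-2p-1$ up to $2p^2-2p+1$.

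The cleanest route, which I would actually prefer to writing out fresh row reductions, is to invoke the standard fact that for a graph on $n$ vertices the Laplacian and signless-Laplacian interpolate through $A_{1/2} = \tfrac12 Q$, but more usefully that the Laplacian eigenvalues can be read off an equitable-partition quotient matrix $B$ whose $(i,j)$ entry records the degree-minus-rowsum data of the blocks; the non-trivial Laplacian eigenvalues split into those fixed by the within-block structure (the $(d_i+1)$ values above, on the $\mathbf 1^\perp$ subspaces) and those governed by the $4\times 4$ quotient. I would compute the four quotient eigenvalues explicitly, verify that one of them is $0$ (forced because $L$ always annihilates the all-ones vector) and that the spectrum sums correctly: the trace $\sum \lambda_i = \sum_v \deg(v) = 2|E|$ provides an immediate consistency check, as does the count of multiplicities summing to $4p^2$.

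The main obstacle will be the final $4\times 4$ quotient determinant: unlike the order-$p$, order-$2$, and order-$2p$ blocks whose contributions come out as clean clique eigenvalues, the four quotient eigenvalues require factoring a quartic, and one must confirm that it factors as $\lambda(\lambda - (2p^2-2p+1))$ times a quadratic whose roots reproduce the already-found values $4p^2$ and $3p^2+1$ (thereby inflating their multiplicities to the stated totals). Ensuring this factorization is exact — rather than producing spurious irrational roots — is the delicate point; I would pin it down by exploiting that $\mathbf 1$ is a guaranteed null vector to peel off the factor $\lambda$, and then matching the remaining cubic against the three candidate clique eigenvalues to force the clean splitting.
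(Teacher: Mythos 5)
Your overall strategy -- build $L$ in block form over the partition $V_1,V_2,V_3,V_4$, peel off the clique eigenvalues $d_i+1$ on the $\mathbf 1^\perp$ subspaces of each block, and finish with a $4\times 4$ equitable-partition quotient -- is exactly the paper's route (the paper simply writes down the block Laplacian and says ``apply the same row and column transformations as in the $A_\alpha$ theorem''), and your consistency checks via the trace and the all-ones kernel are a welcome addition. However, the final step as you describe it is wrong, and since you yourself flag it as the delicate point, it counts as a genuine gap. The quotient quartic does \emph{not} factor as $\lambda\bigl(\lambda-(2p^2-2p+1)\bigr)$ times a quadratic with roots $4p^2$ and $3p^2+1$. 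With $s=p^2+2p$, $t=2p^2-2p$, $u=p^2-1$, adding all columns to the first and clearing gives
\[
\det(B-\lambda I)=\lambda\,(\lambda-4p^2)\Bigl[\lambda^2-(6p^2-2p+1)\lambda+\bigl((3p^2+1)(3p^2-2p)-su\bigr)\Bigr],
\]
and the constant term of the quadratic is $8p^4-8p^3+4p^2=4p^2\,(2p^2-2p+1)$, so the quadratic's roots are $4p^2$ and $2p^2-2p+1$ -- one checks directly that $\lambda=3p^2+1$ gives $(0)(\cdot)-su=-su\neq0$, so $3p^2+1$ is \emph{not} a quotient eigenvalue. The correct quotient spectrum is therefore $\{0,\;4p^2,\;4p^2,\;2p^2-2p+1\}$: the eigenvalue $4p^2$ is picked up \emph{twice} by the quotient (raising its multiplicity from $2p^2-2p-1$ to $2p^2-2p+1$), while $3p^2+1$ keeps multiplicity $p^2+2p-1$.

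Note that your proposed factorization also contradicts your own second paragraph, where you correctly state that the multiplicity of $4p^2$ must be bumped by two, from $2p^2-2p-1$ to $2p^2-2p+1$; a single occurrence of $4p^2$ plus one of $3p^2+1$ in the quotient would instead yield multiplicities $2p^2-2p$ and $p^2+2p$, and the resulting spectrum fails your own trace test: it changes $\sum\lambda_i$ by $(3p^2+1)-4p^2=1-p^2\neq0$. With the factorization corrected, the rest of your argument goes through and recovers the stated spectrum.
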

\begin{proof}
From the above theorem, we have the adjacency matrix of the Superpower graph of $D_p\times D_p$ is given as 
\[
A(\mathcal{S}_{D_{p}\times D_p}) =
\left[
\begin{array}{c|c|c|c}
O_1 & J_{1\times (p^2+2p)} & J_{1\times (2p^2-2p)} & J_{1\times (p^2-1)} \\
\hline
J_{(p^2+2p)\times1} & (J-I)_{p^2+2p} & J_{(p^2+2p)\times(2p^2-2p)} & O_{(p^2+2p)\times (p^2-1)} \\
\hline
J_{(2p^2-2p)\times1} & J_{(2p^2-2p)\times(p^2+2p)} & (J-I)_{(2p^2-2p)} & J_{(2p^2-2p)\times(p^2-1)} \\
\hline
J_{(p^2-1)\times1} & O_{(p^2-1)\times(p^2+2p)} & J_{(p^2-1)\times(2p^2-2p)} & (J-I)_{(p^2-1)} \\
\end{array}
\right]
\]
Next, we have $A_\alpha(G)=\alpha D(G)+(1-\alpha)A(G)$, and $A_\beta(G)=\beta D(G)+(1-\beta)A(G)$, for $\alpha,\beta \in [0,1]$. Therefore, the Laplacian matrix will be given as
 $$D(G)-A(G)=\frac{A_\alpha(G)-A_\beta(G)}{\alpha-\beta}=L(G)$$
\[
L(A(\mathcal{S}_{D_{p}\times D_p})) =
\left[
\begin{array}{c|c|c|c}
4p^2-1 & -J_{1\times (p^2+2p)} & -J_{1\times (2p^2-2p)} & -J_{1\times (p^2-1)} \\
\hline
-J_{(p^2+2p)\times1} & -(J-3p^2I)_{p^2+2p} & -J_{(p^2+2p)\times(2p^2-2p)} & O_{(p^2+2p)\times (p^2-1)} \\
\hline
-J_{(2p^2-2p)\times1} & -J_{(2p^2-2p)\times(p^2+2p)} & -(J-(4p^2-1)I)_{(2p^2-2p)} & -J_{(2p^2-2p)\times(p^2-1)} \\
\hline
-J_{(p^2-1)\times1} & O_{(p^2-1)\times(p^2+2p)} & -J_{(p^2-1)\times(2p^2-2p)} & -(J-3p^2I)_{(p^2-1)} \\
\end{array}
\right]
\]
Now, applying the same row and column transformation used in Theorem $3.2$, we have eigenvalues with their respective multiplicities as
\[
\text{Spec}(L(\mathcal{S}_{D_p\times D_p})) =
\left(
\begin{array}{ccccc}
3p^2-2p & 4p^2 & 3p^2+1 & 2p^2-2p+1 & 0  \\
p^2-2 & 2p^2-2p+1 & p^2+2p-1 & 1 & 1
\end{array}
\right)
\]
\end{proof}

Prior to introducing the \emph{signless Laplacian spectrum}, we remind that this kind of a spectrum gives a complementary insight of the Laplacian spectrum.
Although the Laplacian matrix of a graph is defined as $L(G) = D(G) - A(G)$, the signless Laplacian is defined as $Q(G) = D(G) + A(G)$.
The spectrum of the signless Laplacian. All the eigenvalues of \(Q(G)\) are referred to as the signless Laplacian eigenvalues and form the signless Laplacian spectrum of the graph.
The spectrum is especially useful in the study of graph connectivity, spectral bounds, and bipartiteness.
More precisely, the Laplacian and signless Laplacian have identical spectra on a bipartite graph, but on other graphs, the signless Laplacian has a very different and helpful spectral characterization.

\begin{corollary}
For the group $G=D_p\times D_p$, the signless Laplacian spectrum of $\mathcal{S}_{D_p\times D_p}$ is given as
\[
\text{Spec}(Q(\mathcal{S}_{D_p\times D_p})=
\left(
\begin{array}{cccc}
3p^2-2p-2 & 4p^2-2 & 3p^2-1  \\
p^2-2 & 2p^2-2p-1 & p^2+2p-1 
\end{array}
\right)
\]
 and rest of $4$ eigenvalues are given by the $4\times 4$ determinant and each of these $4$ eigenvalues will be multiplied by $2$.
 \[\tiny
\left|
\begin{array}{cccc}
(4p^2-1)\alpha-\lambda & (p^2+2p)(1-\alpha) & (2p^2-2p)(1-\alpha) & (p^2-1)(1-\alpha) \\

(1-\alpha) & (p^2+2p-1)(1-\alpha)+3p^2\alpha-\lambda & (2p^2-2p)(1-\alpha) & 0\\

(1-\alpha) & (p^2+2p)(1-\alpha) & (2p^2-2p-1)(1-\alpha)+(4p^2-1)\alpha-\lambda & (p^2-1)(1-\alpha) \\

(1-\alpha) & 0 & (1-\alpha)(2p^2-2p) & [(p^2-2)+(2p^2-2p+1)\alpha]-\lambda \\
\end{array}
\right|=0
\]
\end{corollary}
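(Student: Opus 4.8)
The plan is to recognize the signless Laplacian as a member of the $A_\alpha$-family already diagonalized in Theorem 3.1, so that no fresh computation is needed. Recall from the preliminaries that $A_{1/2}(G)=\tfrac12 Q(G)$, whence $Q(\mathcal{S}_{D_p\times D_p})=2\,A_{1/2}(\mathcal{S}_{D_p\times D_p})$. Because multiplying a real symmetric matrix by a scalar scales each eigenvalue by that scalar while leaving the eigenspaces (and hence the multiplicities) unchanged, every eigenvalue $\lambda$ of $A_{1/2}(\mathcal{S}_{D_p\times D_p})$ gives an eigenvalue $2\lambda$ of $Q(\mathcal{S}_{D_p\times D_p})$ with the same multiplicity. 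Thus the entire argument reduces to evaluating the spectrum of Theorem 3.1 at $\alpha=\tfrac12$ and doubling.

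First I would substitute $\alpha=\tfrac12$ into the three explicit eigenvalues. This yields $(3p^2-2p)\tfrac12-1=\tfrac{3p^2-2p-2}{2}$ of multiplicity $p^2-2$, then $4p^2\cdot\tfrac12-1=2p^2-1$ of multiplicity $2p^2-2p-1$, and $(3p^2+1)\tfrac12-1=\tfrac{3p^2-1}{2}$ of multiplicity $p^2+2p-1$. Doubling each produces $3p^2-2p-2$, $4p^2-2$, and $3p^2-1$ with the very multiplicities asserted in the statement, so the three displayed eigenvalues are confirmed.

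It remains to address the four leftover eigenvalues. Setting $\alpha=\tfrac12$ in the $4\times4$ characteristic determinant of Theorem 3.1 gives a quartic whose roots $\mu_1,\dots,\mu_4$ are the corresponding eigenvalues of $A_{1/2}(\mathcal{S}_{D_p\times D_p})$; by the scaling relation the matching signless Laplacian eigenvalues are $2\mu_1,\dots,2\mu_4$, which is exactly the claim that the four roots of the $4\times4$ determinant are to be multiplied by $2$. The only point demanding care is to ensure that this $4\times4$ block really does capture the action of $A_\alpha$ transversal to the three explicit eigenspaces, with no eigenvalue lost or double counted; this is already guaranteed by the row and column reductions performed in the proof of Theorem 3.1, which decouple that block from the remainder. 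As a final consistency check, the multiplicities sum to $(p^2-2)+(2p^2-2p-1)+(p^2+2p-1)+4=4p^2=|G|$, so the list is complete and the corollary follows.
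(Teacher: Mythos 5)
Your proposal is correct and follows essentially the same route as the paper: both identify $Q(\mathcal{S}_{D_p\times D_p})=2A_{1/2}(\mathcal{S}_{D_p\times D_p})$, substitute $\alpha=\tfrac12$ into the explicit eigenvalues and the residual $4\times4$ determinant from Theorem 3.1, and double everything. Your added consistency check that the multiplicities sum to $4p^2$ is a small but worthwhile addition not present in the paper.
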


\begin{proof}
For the case the of signless Laplacian, since $A_\alpha(G)=\alpha D(G)+(1-\alpha)A(G)$. Putting $\alpha=\frac{1}{2}$, we have 
$$2A_{\frac{1}{2}}(G)=D(G)+A(G)$$
Therefore, the sigenless Laplacian eigenvalues will be 
\[
\text{Spec}(2A_\alpha(\mathcal{S}_{D_p\times D_p}))=\text{Spec}(Q(\mathcal{S}_{D_p\times D_p})) =
\left(
\begin{array}{cccc}
\frac{(3p^2-2p)}{2}-1 & 2p^2-1 & \frac{(3p^2+1)}{2}-1  \\
p^2-2 & 2p^2-2p-1 & p^2+2p-1 
\end{array}
\right)
\]
The other remaining eigenvalues will be given by the determinant below and each of these $4$ eigenvalues will be multiplied by $2$. 
\[
\left|
\begin{array}{cccc}
\frac{(4p^2-1)}{2}-\lambda & \frac{(p^2+2p)}{2} & (p^2-p) & \frac{(p^2-1)}{2} \\

\frac{1}{2} & \frac{(4p^2+2p-1)}{2}-\lambda & (p^2-p) & 0\\

\frac{1}{2} & \frac{(p^2+2p)}{2} & (3p^2-p-1)-\lambda & \frac{(p^2-1)}{2} \\

\frac{1}{2} & 0 & p^2-p & \frac{(4p^2-2p-3)}{2}-\lambda \\
\end{array}
\right|=0
\]
\end{proof}

Our focus now changes to a more general and wide scenario after a thorough examination of the Superpower graph of $D_p \times D_p$, including its spectrum and Laplacian spectrum.  In order to get a deeper understanding of these graph topologies, we naturally extend our emphasis to the Superpower graph of $D_{p^k}$.  In particular, we will now examine this more extended graph's $A_\alpha$-adjacency matrix.  Finding out how the spectral characteristics we saw in the $D_p \times D_p$ instance translate and change in this more intricate and wider context is the objective of this next section of our research.

\section{$A_\alpha$-Adjacency of Superpower Graph of $D_{p^k}$}
\begin{theorem}
Let $G=D_{p^k}$ for $p$ being odd prime; then the $A_\alpha$-adjacency spectrum of $\mathcal{S}_{D_{p^k}}$ is given as 
$$[(p^k+1)\alpha-1]^{(p^k-1)}, [p^k\alpha-1]^{(p^k-2)}$$ and the rest of the three eigenvalues are given by the $3\times3$ determinant below:
\[
\left|
\begin{array}{ccc}
(2p^k-1)\alpha-\lambda & (p^k-1)(1-\alpha) & p^k(1-\alpha)  \\
(1-\alpha) & p^k+\alpha-2-\lambda & 0  \\
(1-\alpha) & 0 & p^k+\alpha-1-\lambda  \\
\end{array}
\right|=0
\]

\end{theorem}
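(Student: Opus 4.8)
The plan is to read off the superpower graph structure directly from the element orders of $D_{p^k}$ and then exploit the resulting equitable partition, exactly as in the proof of Theorem 3.1. First I would record the orders: writing $D_{p^k}=\langle a,b\rangle$ with $o(a)=p^k$ and $o(b)=2$, the non-identity rotations $a^i$ have order $p^{\,j}$ for some $1\le j\le k$ (there are $p^k-1$ of them in total), each of the $p^k$ reflections $a^i b$ has order $2$, and $e$ has order $1$. Since divisibility among the rotation orders is total ($p^i\mid p^j$ whenever $i\le j$), every pair of rotations is adjacent, so the rotations span a clique of size $p^k-1$; likewise the reflections span a clique of size $p^k$. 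The decisive point, which is where oddness of $p$ is used, is that a reflection and a rotation are never adjacent: orders $2$ and $p^j$ satisfy neither $2\mid p^j$ (as $p^j$ is odd) nor $p^j\mid 2$ (as $p^j\ge 3$). Finally $e$ is universal because $1$ divides every order.

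This gives the ordered partition $V_1=\{e\}$, $V_2=\{\text{rotations}\}$ with $|V_2|=p^k-1$, and $V_3=\{\text{reflections}\}$ with $|V_3|=p^k$, having degrees $2p^k-1$, $p^k-1$ and $p^k$ respectively. I would then write $A(\mathcal S_{D_{p^k}})$ in block form (a zero block between $V_2$ and $V_3$, and $(J-I)$ on the two clique diagonal blocks), pass to $D$, and form $A_\alpha=\alpha D+(1-\alpha)A$. Each clique diagonal block becomes $(1-\alpha)J+cI$, where $c=(p^k+1)\alpha-1$ for the reflections and $c=p^k\alpha-1$ for the rotations, after simplifying $\alpha\cdot(\text{degree})-(1-\alpha)$.

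The eigenvalues now split in the familiar way. On the subspace of vectors supported in a single clique block and orthogonal to that block's all-ones vector, every $J$ term vanishes, so $A_\alpha$ acts as the scalar $c$; this yields $(p^k+1)\alpha-1$ with multiplicity $p^k-1$ and $p^k\alpha-1$ with multiplicity $p^k-2$, one dimension in each block being absorbed by its all-ones direction. Equivalently, mimicking Theorem 3.1, I would expand $\det(A_\alpha-\lambda I)$, use row operations $R_i\to R_i-R_{i_0}$ inside each block to pull out these factors, and then column operations collecting each $J$ block to reduce the determinant to the $3\times 3$ quotient on the all-ones directions of $V_1,V_2,V_3$; the remaining three eigenvalues are the roots of that quotient.

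The only real care is in computing the quotient entries so that the diagonal comes out correctly. For a constant vector on $V_2$ (size $p^k-1$) the block $(1-\alpha)J+(p^k\alpha-1)I$ contributes $(1-\alpha)(p^k-1)+p^k\alpha-1=p^k+\alpha-2$, and for $V_3$ (size $p^k$) the block $(1-\alpha)J+((p^k+1)\alpha-1)I$ contributes $(1-\alpha)p^k+(p^k+1)\alpha-1=p^k+\alpha-1$; the nonzero off-diagonal entries are $(1-\alpha)$ times the relevant block size, and the $V_2$--$V_3$ entries vanish because that adjacency block is zero. Assembling these gives precisely the stated $3\times 3$ determinant. I do not expect a genuine obstacle here: the content is entirely in (i) the order computation establishing the two-clique-plus-apex structure, which hinges on $p$ being odd, and (ii) the bookkeeping of block sizes and degrees, after which the spectral extraction is the same equitable-partition reduction already used in Theorem 3.1 and justified by Theorem \ref{upperblockmat} for the block-triangular determinant that results.
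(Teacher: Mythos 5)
Your proposal is correct and follows essentially the same route as the paper: partition the vertices into the identity, the $p^k-1$ rotations, and the $p^k$ reflections, write $A_\alpha$ in block form, and use row/column operations (equivalently the equitable-partition quotient) to peel off $(p^k+1)\alpha-1$ and $p^k\alpha-1$ with the stated multiplicities and reduce the rest to the $3\times 3$ determinant. The only difference is that you explicitly verify the two-clique-plus-apex structure from the element orders (where oddness of $p$ enters), a step the paper asserts without proof; your quotient entries match the paper's exactly.
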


\begin{proof}  
It can be easily observed that the adjacency matrix of the supergraph of $D_{p^k}$ is 
\[
A(\mathcal{S}_{D_{p^k}}) =
\left[
\begin{array}{c|c|c}
0 & J_{1\times (p^k-1)} & J_{1\times p^k} \\
\hline
J_{(p^k-1)\times1} & (J-I)_{p^k-1} & O_{(p^k-1)\times(p^k)}\\
\hline
J_{p^k\times1} & O_{p^k\times(p^k-1)} & (J-I)_{p^k} \\
\end{array}
\right]
\]
Also, the diagonal matrix is \[
\mathcal{D}(\mathcal{S}_{D_{p^k}}) =
\left[
\begin{array}{c|c|c}
(2p^k-1) & O_{1\times (p^k-1)} & O_{1\times p^k} \\
\hline
O_{(p^k-1)\times1} & (p^k-1)I_{p^k-1} & O_{(p^k-1)\times(p^k)}\\
\hline
O_{p^k\times1} & O_{p^k\times(p^k-1)} & (p^k)I_{p^k} \\
\end{array}
\right]
\]
Therefore, characteristic equation of $A_{\alpha}(\mathcal{S}_{D_{p^k}})$ will be given as 
\[
\left|
\begin{array}{c|c|c}
(2p^k-1)\alpha-\lambda & (1-\alpha)J_{1\times (p^k-1)} & (1-\alpha)J_{1\times p^k} \\
\hline
(1-\alpha)J_{(p^k-1)\times1} & [(1-\alpha)J+(p^k\alpha-1-\lambda)I]_{p^k-1} & O_{(p^k-1)\times(p^k)}\\
\hline
(1-\alpha)J_{p^k\times1} & O_{p^k\times(p^k-1)} & [(1-\alpha)J+((p^k+1)\alpha-1-\lambda)I]_{p^k} \\
\end{array}
\right|=0
\]
Now, applying $R_{p^k+2}-R_{p^k+1}, R_{p^k+3}-R_{p^k+1},\cdots , R_{2p^k+2}-R_{p^k+1}$. After this, apply column transformation as $C_{p^k+1}\to C_{p^k+1}+C_{p^k+2}+\cdots +C_{2p^k}$, we will get 
\[
\left|
\begin{array}{cc}
A & B \\
C & D \\
\end{array}
\right|=0
\]
Where $A=\left|
\begin{array}{cc}
A & B \\
O & D \\
\end{array}
\right|=0$
Here, it is easy to calculate the determinant of $D$, and it is 
$$[-\lambda+(p^k+1)\alpha-1]^{p^k-1}=0$$
Next, we have to calculate the determinant of $D$.
\[|D|=
\left|
\begin{array}{cccccc}
(2p^k-1)\alpha-\lambda & 1-\alpha & 1-\alpha & \cdots & 1-\alpha & p^k(1-\alpha)  \\
(1-\alpha) & (p^k-1)\alpha-\lambda & 1-\alpha & \cdots & 1-\alpha & 0  \\
0 & 1-p^k\alpha+\lambda & p^k\alpha-1-\lambda & \cdots & 0 & 0  \\
\vdots & \vdots & \vdots & \ddots & \vdots & \vdots  \\
0 & 1-p^k\alpha+\lambda & 0 & \cdots & p^k\alpha-1-\lambda & 0  \\
(1-\alpha) & 0 & 0 & \cdots & 0 & p^k\alpha-\lambda  \\
\end{array}
\right|
\]
Now, applying row transformation as $R_3-R_2, R_4-R_2,\cdots, R_{p^k}-R_2$, we have 
\[|D|=
\left|
\begin{array}{cccccc}
(2p^k-1)\alpha-\lambda & 1-\alpha & 1-\alpha & \cdots & 1-\alpha & p^k(1-\alpha)  \\
(1-\alpha) & (p^k-1)\alpha-\lambda & 1-\alpha & \cdots & 1-\alpha & 0  \\
(1-\alpha) & (1-\alpha) & (p^k-1)\alpha-\lambda & \cdots & 1-\alpha & 0  \\
\vdots & \vdots & \vdots & \ddots & \vdots & \vdots  \\
1-\alpha & 1-\alpha & 1-\alpha & \cdots & (p^k-1)\alpha-\lambda & 0  \\
(1-\alpha) & 0 & 0 & \cdots & 0 & p^k\alpha-\lambda  \\
\end{array}
\right|
\]
Again, on applying $C_2\to C_2+C_3+\cdots+C_{p^k}$, we will get 
\[|D|=[-\lambda+(p^k\alpha-1)]^{p^k-2}
\left|
\begin{array}{ccc}
(2p^k-1)\alpha-\lambda & 1-\alpha & p^k(1-\alpha)  \\
(1-\alpha) & p^k+\alpha-2-\lambda & 0  \\
(1-\alpha) & 0 & p^k+\alpha-1-\lambda  \\
\end{array}
\right|
\]
Therefore, the eigenvalues of $A_{\alpha}(\mathcal{S}_{D_{p^k}})$ are $(p^k+1)\alpha-1$ with algebraic multiplicity $p^k-1$, $p^k\alpha-1$ with algebraic multiplicity $p^k-2$ and rest of the three eigenvalues are given by the determinant below:
\[|D|=
\left|
\begin{array}{ccc}
(2p^k-1)\alpha-\lambda & (p^k-1)(1-\alpha) & p^k(1-\alpha)  \\
(1-\alpha) & p^k+\alpha-2-\lambda & 0  \\
(1-\alpha) & 0 & p^k+\alpha-1-\lambda  \\
\end{array}
\right|=0
\]
Or by solving this, we have a cubic equation as below

\[\small
\begin{aligned}
&(-2p^k + 1)\lambda^3 + \left(4p^{2k} - 8p^k + 3 + (4p^k - 1)\alpha\right)\lambda^2 \\
&+ \left(-2p^{3k} + 11p^{2k} - 11p^k + 3 + (4p^{2k} - 6p^k)\alpha^2 + (-12p^{2k} + 14p^k - 2)\alpha\right)\lambda \\
&+ \left(-4p^{3k} + 10p^{2k} - 6p^k + (4p^k - 4p^{2k})\alpha^3 + (-4p^{3k} + 18p^{2k} - 12p^k)\alpha^2 + (8p^{3k} - 23p^{2k} + 13p^k - 1)\alpha + 1\right)=0
\end{aligned}
\]
\end{proof}
\section{Conclusion}

In this paper, we have investigated the spectral properties of the superpower graphs of the direct product of two dihedral groups, particularly on the group $G = D_p \times D_p$, where $D_p$ is the dihedral group of order $2p$ with $p\neq 2$. Also, we computed the characteristic polynomial and determined the adjacency and Laplacian spectra of the superpower graphs $\mathcal{S}_G$, thereby contributing to the broader understanding of the structural and spectral behavior of such graphs.

Furthermore, we extended our investigation to the $A_\alpha$-adjacency spectrum of superpower graphs of finite groups, analyzing both $\mathcal{S}_{D_{p^k}}$ and $\mathcal{S}_{D_p \times D_p}$ for $p\neq 2$. This has been shown that investigating the $A_\alpha$-matrix, which interpolates between the adjacency and signless Laplacian matrices, provides a more comprehensive understanding of spectrum analysis and can act as a connection between various spectral characteristics.

These observations are useful in the context of algebraic and spectral graph theory, as it is related to group-based graphs, and they also present various opportunities for further development. To understand further, one may specifically examine various families of non-abelian groups and corresponding graph-theoretic constructions, such as the symmetric groups, di-cyclic groups, or even bigger direct products.  Moreover, some new structural or spectral bound characterizations might be obtained by a detailed study of the relationships between spectral properties and structural features.

\section*{Acknowledgement}

\section*{Declarations}
\textbf{Conflict of interest:} The Authors claim to have no conflicts of interest.


\end{document}